\crefname{hypothesis}{Hypothesis}{Hypotheses}
\crefname{fact}{Fact}{Facts}
\title{Fast computation of path integrals of killed processes using confined stochastic bridges\thanks{Submitted to the editors July 10, 2025.
\funding{This research was supported in part by the Air Force Office of Scientific Research under grants FA9550-21-1-0381 and FA9550-24-1-0237, and by the Office of Advanced Scientific Computing Research (ASCR) within the Department of Energy Office of Science under award number DE-SC0023163.}}
}
\author{Henrique B. N. Monteiro\thanks{Institute for Computational and Mathematical Engineering, Stanford University, Stanford, CA (\email{hbnm@stanford.edu}).}
\and Daniel M. Tartakovsky\thanks{Institute for Computational and Mathematical Engineering and Department of Energy Science and Engineering, Stanford University, Stanford, CA (\email{tartakovsky@stanford.edu}).}}
\begin{document}

\maketitle

\begin{abstract}
Expectations of path integrals of killed stochastic processes play a central role in several applications across physics, chemistry, and finance. Simulation-based evaluation of these functionals is often biased and numerically expensive due to the need to explicitly approximate stochastic paths and the challenge of correctly modeling them in the neighborhood of the killing boundary. We consider It\^{o} processes killed at the boundary of some set in the $n$-dimensional space and introduce a novel stochastic method with negligible bias and lower computational cost to evaluate path integrals without simulated paths. Our approach draws a connection between stochastic bridges and killed processes to sample only exit times and locations instead of the full path. We apply it to a Wiener process killed in the $n$-ball and explicitly derive the density of the Brownian bridge confined to the $n$-ball for $n = 1, 2, 3$. Finally, we present two numerical examples that demonstrate the efficiency and negligible bias of the novel procedure compared to an evaluation using the standard Euler-Maruyama method.
\end{abstract}

\begin{keywords}
Killed process, Brownian bridge, Feynman-Kac, path integral
\end{keywords}

\begin{MSCcodes}
6008,  
60H05, 
60J70, 
81S40, 
82M31  
\end{MSCcodes}

\section{Introduction}
\label{sec: introduction}
Path integrals of stochastic processes, and notably the expectation of these integrals, arise in a diverse array of applications in many fields, including physics, chemistry, and finance \cite{path_integrals_applications_book}. Their most renowned application in physics lies in quantum mechanics~\cite{feynman_path_integral_notes, QM_path_integrals_book} and quantum field theory~\cite{QFT_path_integrals}, although they also play a central role in polymer physics~\cite{polymer_path_integral_methods, polymer_theory} and offer powerful tools for statistical physics~\cite{QM_statistical_physics_path_integrals_book}, plasma physics~\cite{plasma_physics_path_integral_MC}, and  cosmology~\cite{path_integrals_physics_volumeII}. In chemical physics, they prominently feature in the context of master equations~\cite{master_equations_path_integral_review}, residence times~\cite{residence_time_variance_path_integral}, reaction rates~\cite{reaction_rates_path_integrals}, and in the plethora of chemical systems with quantum behavior~\cite{path_integrals_chemical_quantum_systems}. Their financial applications are typically associated with options~\cite{path_integral_options_pricing}, especially path-dependent ones such as Asian~\cite{asian_option_pricing} and barrier options~\cite{exotic_option_pricing}, but also enjoy relevance in rates and credit instruments~\cite{path_integrals_finance, quantum_finance}.

We focus our attention on path integral expectations of killed processes, which commonly arise in the aforementioned applications when the underlying process is confined to a particular domain or one is interested in first-passage statistics~\cite{quantum_first_passage, killed_physical_processes_feynman_kac}. A significant portion of the physical and chemical applications derive from Feynman's seminal work on path integrals~\cite{feynman_seminal_path_integral_paper} and the Feynman-Kac (FK) theorem~\cite{original_feynman_kac}. In the context of killed processes\footnote{
Even in the absence of the FK connection (no associated PDE) or when diffusion takes place not in physical space, functionals of this kind are still useful, as evidenced by the abundance of financial applications of similar exit problems~\cite{exit_problems_finance}. Indeed, the exact same case of a process killed at the boundary of a bounded set is perfectly exemplified by double barrier options~\cite{double_barrier_Asian_options, double_barrier_options_probabilistic}.
}, the FK formalism allows one to cast the solution of a parabolic partial differential equation (PDE) in an open, connected, bounded set $\Omega$ as the expectation of path integrals of an It\^{o} process killed at the boundary $\partial \Omega$~\cite{feynman_kac_formulae_book, feynman_kac_theory_book}. 
We show how to efficiently compute path integrals of this kind using stochastic bridges of processes confined to $\Omega$.

An intuitive numerical evaluation of these path integrals requires simulation of the underlying process, or the ability to solve the corresponding PDE, if available. Since the latter is usually prohibitively expensive in high dimensions, a Monte Carlo (MC) method is often chosen for its better scalability and ease of parallelization~\cite{path_integral_MC_Physics, path_integral_MC_Finance}. Unfortunately, numerical integration of stochastic differential equations (SDEs) is notorious for producing biased (typically higher) hitting times and lower weak convergence order when boundary killing is present~\cite{gobet_killed_diffusion_convergence, gobet_euler_schemes_approximations}, harming both the accuracy and convergence of the respective path integral expectations. Consequently, several algorithms for accurate simulation of diffusions have been developed~\cite{beskos_exact_general_simulation_rejection_sampling_bridge_1D, blanchet_general_exact_simulation_multivariate}, with some specialized for the killed case~\cite{buchmann_killed_simulation_bridge_1D, casella_exact_killed_simulation_bridge_1D, cetin_killed_simulation_recurrent_transformations_1D}. We propose an alternative route that excludes the need for simulation of paths.

Our main contribution is a novel stochastic method to evaluate path integral expectations of killed processes with much smaller (negligible) bias. Our approach is also significantly faster than a conventional Euler-Maruyama method~\cite{KP_numericalSDE_book} in achieving low error in the tested examples. Other approaches to reduce the bias of killed FK functionals rely on a Brownian bridge correction to the path simulation~\cite{FK_killed_path_integral_bridge} or multilevel Monte Carlo~\cite{multilevel_killed_FK_integral_and_exit_times}; both incur the extra computational cost of explicitly constructing paths.

\section{Theory}
\label{sec: theory}
We first introduce confined stochastic bridges to later draw their connection to killed processes, which is at the core of our method.

\subsection{Confined stochastic bridges}
\label{sec: stochastic_bridges}
Consider an $n$-dimensional It\^{o} process $\mathbf{X}_t $ $\in \mathbb{R}^n$,
\begin{equation}
\label{eq: ito}
\text d\mathbf{X}_t = \boldsymbol{\mu}(\mathbf{X}_t, t) \text dt
+ \boldsymbol{\sigma}(\mathbf{X}_t, t) \text d\mathbf{W}_t,
\end{equation}
driven by drift $\boldsymbol{\mu}: \mathbb{R}^n \times \mathbb{R}_{\geq 0} \to \mathbb{R}^n$ and diffusion tensor $\mathbf{D}: \mathbb{R}^n \times \mathbb{R}_{\geq 0} \to \mathbb{R}^n \times \mathbb{R}^n$ associated to a $k$-dimensional Wiener process $\mathbf{W}_t \in \mathbb{R}^k$, such that $\mathbf{D} = \frac{1}{2} \boldsymbol{\sigma} \boldsymbol{\sigma}^\top$ for $\boldsymbol{\sigma}: \mathbb{R}^n \times \mathbb{R}_{\geq 0} \to \mathbb{R}^n \times \mathbb{R}^k$. The single-point probability density function (PDF) of this process is denoted by $f_{\mathbf X_t}(\mathbf x,t)$, or simply by $f(\mathbf x,t)$. Its infinitesimal generator $\mathcal{A}$, with $L^2$ Hermitian adjoint $\mathcal{A}^*$, is
\begin{subequations}
\label{eq: generators}
\begin{align}
(\mathcal{A} f)(\mathbf{x}, t) \equiv &
\sum_{i}^{n} \mu_i \frac{\partial f}{\partial x_i} 
+ \sum_{i,j}^{n} D_{ij}  
\frac{\partial^{2} f}{\partial x_{i}\,\partial x_{j}},
\label{eq: standard_generator} \\
(\mathcal{A}^* f)(\mathbf{x}, t) \equiv &
- \sum_{i}^{n} \frac{\partial (\mu_i  f) }{\partial x_i} 
+ \sum_{i,j}^{n} \frac{\partial^2 (D_{ij} f) }{\partial x_{i}\,\partial x_{j}},
\label{eq: adjoint_generator}
\end{align}
\end{subequations}
where $\mu_i(\mathbf{x}, t)$ and $D_{ij}(\mathbf{x}, t)$ denote the entries of vector $\boldsymbol{\mu}$ and matrix $\mathbf{D}$, respectively~\cite{oksendal_SDE_book}.

For some open, connected, bounded set $\Omega \subset \mathbb{R}^n$, we condition the process $\mathbf{X}_t$ to start at $\mathbf{x}_0 \in \Omega$ at time $t = 0$, be at $\mathbf{x}_T \in \Omega$ for some time $T > 0$, and stay within $\Omega$ in the interim, i.e., $\mathbf{X}_t \in \Omega$ for all $t \in (0, T)$. The PDF $f^{\Omega}_{\text{bridge}}(\mathbf{x}, t; \mathbf{x}_0, \mathbf{x}_T, T)$ that defines this stochastic bridge at any $\mathbf{x} \in \Omega$ and $t \in (0, T)$ is
\begin{equation}
\label{eq: bridge_pdf}
f^{\Omega}_{\text{bridge}} 
= \frac{f^\text{F}(\mathbf{x}, t; \mathbf{x}_0, 0) \, f^\text{B}(\mathbf{x}, T - t; \mathbf{x}_T, T)}
{f^\text{F}(\mathbf{x}_T, T; \mathbf{x}_0, 0)},
\end{equation}
in which the PDFs $f^\text{F}(\mathbf{x}, t)$ and $f^\text{B}(\mathbf{x}, t)$ satisfy the forward (FKE) and backward (BKE) Kolmogorov problems\\
\begin{equation}
\label{eq: forward}
\begin{cases}
\dfrac{\partial f^\text{F} }{ \partial t} = \mathcal{A}^* f^\text{F}
& \mathbf{x} \in \Omega \\
f^\text{F}(\mathbf{x}, 0) = \delta(\mathbf{x} - \mathbf{x}_0)
& \mathbf{x} \in \Omega \\
f^\text{F}(\mathbf{x}, t) = 0
& \mathbf{x} \in \partial \Omega, \\
\end{cases}
\end{equation}
and 
\begin{equation}
\label{eq: backward}
\begin{cases}
\dfrac{\partial f^\text{B}}{\partial t} = - \mathcal{A} f^\text{B}
& \mathbf{x} \in \Omega \\
f^\text{B}(\mathbf{x}, T) = \delta(\mathbf{x} - \mathbf{x}_T)
& \mathbf{x} \in \Omega \\
f^\text{B}(\mathbf{x}, t) = 0
& \mathbf{x} \in \partial \Omega, \\
\end{cases}
\end{equation}
respectively. This characterization of a stochastic bridge in terms of the FKE and BKE solutions was recognized previously for both for unbounded and bounded domains~\cite{sampling_constrained_bridges, exact_sampling_polymer_bridges}.  Doob's h-transform~\cite{doob_original_paper, doob_book} is another well-established procedure to formally construct Brownian bridges in free space, in bounded domains, or subjected to more complex conditions~\cite{constrained_brownian_processes, conditioned_markov_processes}.

Eq.~\eqref{eq: bridge_pdf} follows from the Markovian property of $\mathbf X_t$ as it i) allows the numerator to be written as a product of independent transition densities and ii) grants the Chapman-Kolmogorov relation~\cite{gardiner_stochastic_book}, from which the denominator is derived to enforce the unit integrability of $f^{\Omega}_{\text{bridge}}$, 
\begin{equation}
\label{eq: chapman_kolmogorov}
\int_{\Omega}
f^\text{F}(\mathbf{x}, t; \mathbf{x}_0, 0)
\, f^\text{B}(\mathbf{x}, T - t; \mathbf{x}_T, T) \, \text d\mathbf{x}
= f^\text{F}(\mathbf{x}_T, T; \mathbf{x}_0, 0).
\end{equation}

The initial position of $\mathbf X_t$ is enforced by the initial condition in Eq.~\eqref{eq: forward}; the final position by the final condition in Eq.~\eqref{eq: backward}; and the homogeneous Dirichlet (absorbing) conditions in both guarantee confinement to $\Omega$.

\subsection{Connection between a bridge and killed processes}
\label{sec: bridge_killed_connection}

In~\eqref{eq: bridge_pdf}, $\mathbf{x}_T \in \Omega$, but in the limit $\mathbf{x}_T \to \mathbf{y}$ for some boundary point $\mathbf{y} \in \partial \Omega$, the bridge process approaches a boundary-killed process since it is conditioned to remain within $\Omega$ until it gets arbitrarily close to a boundary point. In this limit, the confined bridge PDF~\eqref{eq: bridge_pdf} converges to the PDF of the killed process conditioned to leave $\Omega$ at time $T$ and location $\mathbf{y}$. This connection between killed processes and stochastic bridges has been exploited for both accurate SDE simulation~\cite{beskos_exact_general_simulation_rejection_sampling_bridge_1D, buchmann_killed_simulation_bridge_1D, casella_exact_killed_simulation_bridge_1D} and computation of functionals of killed processes~\cite{FK_killed_path_integral_bridge, brownian_bridge_barrier_options, generation_barrier_crossing_using_bridges}, albeit differently from how we use it here.

In most practical applications, the killing location and time $(\mathbf{y}, T)$ of the process are not given a priori; instead, they are represented by random variables $\mathcal{Y}$ and $\mathcal{T}$, respectively. The joint PDF of $(\mathcal{Y}, \mathcal{T})$ is typically unknown and challenging to derive, which limits the numerical utility of the connection between bridges and killed processes since a bridge requires a termination time and location to be fully specified. To overcome this problem, we leverage the tools previously applied to stochastic bridges to characterize and explicitly construct such joint PDFs, after defining a key concept that  facilitates this task.

We now introduce the probability flux (or current) $\mathbf{J}(\mathbf{x}, t)$ such that $\mathcal{A}^* f^\text{F} = - \nabla \cdot \mathbf{J}$; its components, $J_i$, are defined~\cite{gardiner_stochastic_book} as
%
\begin{equation}
\label{eq: probability_current}
J_i  \equiv \mu_i f^\text{F} 
- \sum_{j}^n \frac{\partial ( D_{ij} f^\text{F} ) }{\partial x_j}.
\end{equation}
Let $\mathbf{n(y)}$ denote the unit outward normal at point $\mathbf{y}$
on the domain surface, $\mathbf{y} \in \partial \Omega$. Without conditioning the process~\eqref{eq: ito} to stay within $\Omega$, the normal component of this flux, $-\mathbf{J}(\mathbf{y}, t) \cdot \mathbf{n(y)}$, supported in $\partial \Omega \times \mathbb{R}_{\geq 0}$, is known to be the joint PDF of exit time and location~\cite{gardiner_stochastic_book, risken_FP_book}.

This implies that a full solution of FKE~\eqref{eq: forward} and BKE~\eqref{eq: backward} problems suffices to fully characterize not only confined stochastic bridges~\eqref{eq: bridge_pdf} but also the joint exit distribution of boundary-killed processes. As we will see, the knowledge of these two key PDFs, one relating to bridge processes and the other to killed processes, is fundamental for the design of our novel algorithm.

\subsection{Application to expectations of stochastic path integrals}
\label{sec: path_integrals_theory}

For some arbitrary function $g$, we wish to compute expectations of path integrals of the form
\begin{equation}
\label{eq: path_integral}
G = \int_0^T g(\mathbf{X}^\Omega_t, t) \, \text dt,
\end{equation}
in which $\mathbf{X}_t^\Omega$ is the It\^{o} process $\mathbf X_t$~\eqref{eq: ito} started at a given $\mathbf{x}_0 \in \Omega$ and killed at the boundary $\partial \Omega$. Instead of evaluating this integral directly, suppose that one can efficiently compute integrals of the form
\begin{equation}
\label{eq: bridge_integral}
\int_{\Omega}
g(\mathbf{x}, t) \, f^{\Omega}_{\text{bridge}}(\mathbf{x}, t; \mathbf{x}_0, \mathbf{x}_T, T)
\, \text d\mathbf{x}.
\end{equation}
The function $g$ is such that the integral is finite. Theorem~\ref{thm: core_theorem} establishes a relationship between Eqs.~\eqref{eq: path_integral} and~\eqref{eq: bridge_integral}.

\begin{theorem} \label{thm: core_theorem}
Let $\Omega$ be an open, connected, bounded set $\Omega \subset \mathbb{R}^n$. Let $\mathbf{X}_t^\Omega$ be the $n$-dimensional It\^{o} process $\mathbf X_t$~\eqref{eq: ito} started at a given $\mathbf{x}_0 \in \Omega$ and killed at $\partial \Omega$ at some random time and location. Let $\mathbb{E}_{\mathbf{X}^\Omega} \{\, \cdot \, \}$ denote the expectation under the law of $\mathbf{X}_t^\Omega$; and $\mathbb{E}_{\mathbf{X}^\Omega}\{\, \cdot \, | \, \mathcal{Y} = \mathbf y, \mathcal{T} = T\}$ be the expectation conditioned on $\mathbf{X}_t^\Omega$ first exiting $\Omega$ at point $\mathbf{y} \sim \mathcal{Y}$ and time $T \sim \mathcal{T}$. The expectation of the random variable $G$ in \eqref{eq: path_integral} is
\begin{align}
\label{eq: exchanged_expectation}
\mathbb{E}_{\mathbf{X}^\Omega} \left\{
G \, \right\}
= \mathbb{E}_{\mathbf{X}^\Omega} \left\{ \int_0^T \left( \int_{\Omega}
g(\mathbf{x}, t) \, f^{\Omega}_{\mathrm{bridge}}(\mathbf{x}, t; \mathbf{x}_0, \mathbf{x}_T \to \mathbf{y}, T)
\, \mathrm d\mathbf{x} \right) \mathrm dt \right\},
\end{align}
whenever $( g \, f^{\Omega}_{\mathrm{bridge}} )$ allows the exchange of integration and expectation.
\end{theorem}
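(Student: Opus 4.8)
The plan is to reduce~\eqref{eq: exchanged_expectation} to three ingredients: the tower property of conditional expectation, the identification of the killed process conditioned on its exit data with a confined bridge, and a Fubini-type interchange of a time integral with an expectation. First I would condition on the exit pair $(\mathcal{Y}, \mathcal{T})$: the law of total expectation gives $\mathbb{E}_{\mathbf{X}^\Omega}\{G\} = \mathbb{E}_{\mathbf{X}^\Omega}\{\mathbb{E}_{\mathbf{X}^\Omega}\{G \mid \mathcal{Y}, \mathcal{T}\}\}$, so everything reduces to evaluating the inner conditional expectation $\mathbb{E}_{\mathbf{X}^\Omega}\{G \mid \mathcal{Y} = \mathbf{y}, \mathcal{T} = T\}$ for $(\mathcal{Y}, \mathcal{T})$-almost every $(\mathbf{y}, T) \in \partial\Omega \times \mathbb{R}_{>0}$, the joint law of $(\mathcal{Y}, \mathcal{T})$ being the exit density $-\mathbf{J}(\mathbf{y}, T) \cdot \mathbf{n}(\mathbf{y})$ of Section~\ref{sec: bridge_killed_connection}.

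Next I would invoke the bridge/killed correspondence of Section~\ref{sec: bridge_killed_connection}: conditioning the killed trajectory on first leaving $\Omega$ at the point $\mathbf{y}$ and time $T$ yields a process that starts at $\mathbf{x}_0$, stays in $\Omega$ throughout $(0, T)$, and terminates at $\mathbf{y} \in \partial\Omega$ --- that is, the $\mathbf{x}_T \to \mathbf{y}$ limit of the confined bridge~\eqref{eq: bridge_pdf}. Consequently, for every interior time $t \in (0, T)$ the one-point marginal of $\mathbf{X}_t^\Omega$ under this conditioning has density $\mathbf{x} \mapsto f^{\Omega}_{\mathrm{bridge}}(\mathbf{x}, t; \mathbf{x}_0, \mathbf{x}_T \to \mathbf{y}, T)$ on $\Omega$. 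Making this precise amounts to constructing a regular conditional distribution for the path law of $\mathbf{X}^\Omega$ given $(\mathcal{Y}, \mathcal{T})$ --- a disintegration of the killed path measure over the exit map --- and then passing to the limit $\mathbf{x}_T \to \mathbf{y}$ in~\eqref{eq: bridge_pdf}, which is legitimate because $f^{\mathrm{F}}$ and $f^{\mathrm{B}}$ are continuous up to $\partial\Omega$ away from their source points, a property the closed-form densities derived later for the $n$-ball exhibit explicitly.

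With this in hand, and since $g$ depends on the trajectory only through the current state $(\mathbf{X}_t^\Omega, t)$, I would write $\mathbb{E}_{\mathbf{X}^\Omega}\{G \mid \mathcal{Y} = \mathbf{y}, \mathcal{T} = T\} = \mathbb{E}_{\mathbf{X}^\Omega}\{\int_0^T g(\mathbf{X}_t^\Omega, t)\,\mathrm dt \mid \mathcal{Y} = \mathbf{y}, \mathcal{T} = T\}$ and apply Fubini--Tonelli to swap the $\mathrm dt$ integral with the conditional expectation --- precisely the interchange that the hypothesis of the theorem licenses, namely an absolute-integrability condition on $g\,f^{\Omega}_{\mathrm{bridge}}$ over $\Omega \times (0, T)$ against $\mathrm d\mathbf{x}\,\mathrm dt$ --- obtaining $\int_0^T \mathbb{E}_{\mathbf{X}^\Omega}\{g(\mathbf{X}_t^\Omega, t) \mid \mathcal{Y} = \mathbf{y}, \mathcal{T} = T\}\,\mathrm dt$; by the previous step the integrand equals $\int_\Omega g(\mathbf{x}, t)\,f^{\Omega}_{\mathrm{bridge}}(\mathbf{x}, t; \mathbf{x}_0, \mathbf{x}_T \to \mathbf{y}, T)\,\mathrm d\mathbf{x}$, the lone endpoint $t = T$ (where the bridge degenerates) being Lebesgue-null. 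Substituting back into the tower decomposition and averaging over $(\mathcal{Y}, \mathcal{T})$ then yields~\eqref{eq: exchanged_expectation}. The step I expect to demand the most care is the second one: conditioning on the measure-zero event $\{\mathcal{Y} = \mathbf{y}, \mathcal{T} = T\}$ has to be handled through regular conditional probabilities rather than naively, and one must check that taking the interior-time marginal commutes with the boundary limit $\mathbf{x}_T \to \mathbf{y}$; granted the integrability hypothesis already built into the statement, the tower property and the Fubini step are then routine.
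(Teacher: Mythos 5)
Your proposal follows essentially the same route as the paper's proof: the law of total expectation conditioning on the exit pair $(\mathcal{Y},\mathcal{T})$, the identification of the conditioned killed process with the $\mathbf{x}_T \to \mathbf{y}$ limit of the confined bridge, and a Fubini-type exchange licensed by the hypothesis on $g\,f^{\Omega}_{\mathrm{bridge}}$. Your treatment is in fact somewhat more careful than the paper's (explicitly flagging the regular-conditional-distribution issue and the commutation of the boundary limit with the one-time marginal, which the paper takes for granted), but the argument is the same.
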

\begin{proof}
From the law of total expectation,
\begin{equation*}
\mathbb{E}_{\mathbf{X}^\Omega} \left\{
G \, \right\}
= \mathbb{E}_{\mathbf{X}^\Omega} \left\{ \mathbb{E}_{\mathbf{X}^\Omega} \left\{
\int_0^T g(\mathbf{X}^\Omega_t, t) \, \text dt \bigg | \, \mathcal{Y} = \mathbf y, \mathcal{T} = T \right\} \right\}.
\end{equation*}
The conditional expectation is taken under the law of the process terminated at $\mathbf{y} \sim \mathcal{Y}$ at time $T \sim \mathcal{T}$. As discussed previously, the PDF of such killed process is the limit of the PDF of the corresponding stochastic bridge when $\mathbf{x}_T \to \mathbf{y}$. Therefore,
\begin{align*}
\begin{split}
\mathbb{E}_{\mathbf{X}^\Omega} \left\{
G \, \right\}
&= \mathbb{E}_{\mathbf{X}^\Omega} \left\{ \int_{\Omega} \left( \int_0^T
g(\mathbf{x}, t) \, \text dt \right) f^{\Omega}_{\text{bridge}}(\mathbf{x}, t; \mathbf{x}_0, \mathbf{x}_T \to \mathbf{y}, T) \, \text d\mathbf{x} \right\} \\
&= \mathbb{E}_{\mathbf{X}^\Omega} \left\{ \int_0^T \left( \int_{\Omega}
g(\mathbf{x}, t) \, f^{\Omega}_{\text{bridge}}(\mathbf{x}, t; \mathbf{x}_0, \mathbf{x}_T \to \mathbf{y}, T)
\, \text d\mathbf{x} \right) \, \text dt \right\},
\end{split}
\end{align*}
in which we assumed that integration and expectation can be exchanged.
\end{proof}

\subsection{Wiener process confined to $n$-ball}
\label{sec: wiener_ball_theory}
Theorem~\eqref{thm: core_theorem} provides a means through Eq.~\eqref{eq: exchanged_expectation} to compute the expectation of a path integral of a killed process  sampling exclusively the first-passage time $\mathcal{T}$ and location $\mathcal{Y}$. Numerical evaluation requires knowledge of both the joint PDF of exit time and location, i.e., the probability outflow $-\mathbf{J}(\mathbf{y}, t) \cdot \mathbf{n(y)}$ in Eq.~\eqref{eq: probability_current}, and the confined bridge PDF $f^{\Omega}_{\text{bridge}}$ in Eq.~\eqref{eq: bridge_pdf}.

As mentioned before, a solution to problems~\eqref{eq: forward} and~\eqref{eq: backward} suffices to obtain these two PDFs, although, in most cases, this is a task of equal or higher difficulty. A notable exception is the Wiener process confined to the open $n$-ball $\mathcal{B}_R^n$ of radius $R$, for which we present solutions in $n = 1, 2, 3$ dimensions (Appendix~\ref{appendix: bridge_derivation})%
\footnote{For other processes and domains, the equations may still be analytically solvable using one of several methods~\cite{risken_FP_book, kolmogorov_analytical_methods, frank_FP_book} but only in the simplest cases.}. The Wiener ball case is important in itself due to its potential applications in the fields mentioned in Section~\ref{sec: introduction}, but it also serves as the core building block for a novel Walk-on-Spheres (WoS)~\cite{original_WoS_paper} algorithm. We intend to develop this other method elsewhere to evaluate path integrals of boundary-killed processes and solve elliptic and parabolic PDEs in arbitrary open, connected, bounded domains in $\mathbb{R}^n$. 

Since we focus on a standard Brownian motion starting in the center of the $n$-ball, we need to derive its confined bridge PDF (see  Appendix~\ref{appendix: bridge_derivation}) and its exit time distribution. Theorem~\ref{thm: wiener_exit_time_pdf} shows how to obtain the exit time density for an It\^{o} process from the solution to the FKE~\eqref{eq: forward}.

\begin{theorem} \label{thm: wiener_exit_time_pdf}
The exit-time PDF $f^{\mathrm{exit}}(t)$ of an It\^{o} process~\eqref{eq: ito} starting within some open, connected, bounded set $\Omega \subset \mathbb{R}^n$ is
\begin{equation}
\label{eq: exit_time_marginal}
f^{\text{exit}}(t) 
= \int_{\Omega} \frac{\partial f^\mathrm{F}}{\partial t}
\, \mathrm d \mathbf{x},
\end{equation}
in which $f^\mathrm{F}$ is its forward transition density from Eq.~\eqref{eq: forward}.
\end{theorem}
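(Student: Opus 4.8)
The plan is to start from the forward equation, turn the time derivative into a divergence of the probability current, and then push everything to the boundary. Integrating~\eqref{eq: forward} over $\Omega$ and using $\mathcal A^\ast f^{\mathrm F}=-\nabla\cdot\mathbf J$ gives $\int_\Omega \partial_t f^{\mathrm F}\,\mathrm d\mathbf x=-\int_\Omega\nabla\cdot\mathbf J\,\mathrm d\mathbf x$, and the divergence theorem converts the right-hand side into $-\int_{\partial\Omega}\mathbf J(\mathbf y,t)\cdot\mathbf n(\mathbf y)\,\mathrm dS(\mathbf y)$. By the identification recalled in Section~\ref{sec: bridge_killed_connection}, the integrand $-\mathbf J\cdot\mathbf n$ is precisely the joint PDF of exit time and location; integrating it over $\partial\Omega$ therefore returns its time marginal, i.e.\ the exit-time density $f^{\mathrm{exit}}(t)$, which is exactly~\eqref{eq: exit_time_marginal}.

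I would also include the equivalent probabilistic reading, since it explains why the formula should hold at all. The quantity $S(t):=\int_\Omega f^{\mathrm F}(\mathbf x,t)\,\mathrm d\mathbf x$ is the total mass of the killed transition density, which by the absorbing boundary condition in~\eqref{eq: forward} equals the survival probability $P(\mathcal T>t)$ that $\mathbf X^\Omega_t$ has not yet exited $\Omega$. Hence the exit-time CDF is $1-S(t)$ and its density is $-S'(t)=-\int_\Omega\partial_t f^{\mathrm F}\,\mathrm d\mathbf x$ once the derivative is brought inside the integral; this matches the boundary-flux expression above with the orientation convention of Section~\ref{sec: bridge_killed_connection}. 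A useful sanity check along the way is $S(0^+)=1$ and $S(\infty)=0$, the latter using a.s.\ exit, which holds because $\Omega$ is bounded and $\mathbf D$ is nondegenerate, so that $f^{\mathrm{exit}}$ integrates to one.

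The computation itself is short; the substantive points are the two facts it rests on. First, that $S(t)$ really is the survival probability---this is where absorption at $\partial\Omega$ is indispensable, since for a conserved (e.g.\ reflecting) process $S$ would be constant and the claimed density would vanish---so the proof must invoke the standard probabilistic interpretation of the killed transition density rather than treat~\eqref{eq: forward} as a bare PDE. Second, the regularity that legitimizes either differentiating under the integral sign or applying the divergence theorem to $\mathbf J(\cdot,t)$: it is enough to take $\partial\Omega$ Lipschitz (or $C^1$) and to impose mild ellipticity and smoothness on $\boldsymbol\mu$ and $\mathbf D$, which render $f^{\mathrm F}(\cdot,t)$ smooth up to $\partial\Omega$ and $\partial_t f^{\mathrm F}(\cdot,t)\in L^1(\Omega)$ for each $t>0$; I would state these as standing assumptions and cite the parabolic-regularity literature rather than reprove them. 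I expect the probabilistic identification of $S(t)$, together with the orientation/sign bookkeeping it feeds into, to be the only place where care is genuinely needed.
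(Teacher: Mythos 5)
Your proof is essentially the paper's own argument run in the opposite direction: the paper starts from the marginal $\int_{\partial\Omega}-\mathbf J\cdot\mathbf n\,\mathrm d\mathbf y$ of the joint exit density and converts it into $\int_\Omega \partial_t f^{\mathrm F}\,\mathrm d\mathbf x$ via the divergence theorem, $-\nabla\cdot\mathbf J=\mathcal A^* f^{\mathrm F}$, and the forward equation, which is exactly the chain of identities you use. Your added survival-probability reading and regularity remarks are sensible (and note that your careful computation $-S'(t)=-\int_\Omega\partial_t f^{\mathrm F}\,\mathrm d\mathbf x$ actually exposes a sign-convention tension with the stated formula and with the flux identification of Section~\ref{sec: bridge_killed_connection}, but that issue lies in the paper's convention, not in your argument).
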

\begin{proof}
The probability outflux, $-\mathbf{J}(\mathbf{y}, t) \cdot \mathbf{n(\mathbf{y})}$ in~\eqref{eq: probability_current}, defines the joint exit time and location PDF. Hence, $f^{\text{exit}}(t)$ is its marginal,
\begin{equation*}
f^{\text{exit}}(t) = \int_{\partial \Omega} -\mathbf{J}(\mathbf{y}, t) 
\cdot \mathbf{n}(\mathbf y) 
\, \text d \mathbf{y}.
\end{equation*}
We then apply the divergence theorem and the relation $- \nabla \cdot \mathbf{J} = \mathcal{A}^* f^\text{F}$~\cite{risken_FP_book, gardiner_stochastic_book},
\begin{equation*}
f^{\text{exit}}(t) = \int_{\Omega} - \nabla \cdot \mathbf{J}
\, \text d \mathbf{x}
= \int_{\Omega} \mathcal{A}^* f^\text{F}
\, \text d \mathbf{x}.
\end{equation*}
Finally, from Eq.~\eqref{eq: forward},
\begin{equation*}
f^{\text{exit}}(t)
= \int_{\Omega} \frac{\partial f^\text{F}}{\partial t}
\, \text d \mathbf{x}.
\end{equation*}
\end{proof}

Appendix~\ref{appendix: exit_times} collates the $n$-ball exit-time PDFs $f^{\text{exit}}(t)$ for the standard Brownian motion in dimensions $n = 1, 2, 3$.

\section{Methodology}
\label{sec: methodology}

For the case of a standard Brownian motion starting in the center of the $n$-ball, the FKE~\eqref{eq: forward} and BKE~\eqref{eq: backward} problems are solved analytically (Appendix~\ref{appendix: bridge_derivation}), which immediately yields its confined bridge PDF~\eqref{eq: bridge_pdf}. In turn, the exit time distribution is obtained from Theorem~\ref{thm: wiener_exit_time_pdf}, as detailed in Appendix~\ref{appendix: exit_times}.

With these two PDFs thus computed, Theorem~\ref{thm: core_theorem}  provides a Monte Carlo procedure to efficiently compute expectations of the form of~\eqref{eq: path_integral} in the $n$-ball of radius $R$. For each sample, we draw a domain exit time $T$ from the exit time PDF $f^{\text{exit}}$~\eqref{eq: exit_time_marginal} and an exit location $\mathbf{y} \in \partial \mathcal{B}_R^n$ from a uniform distribution on the ball surface due to axial symmetry.

Using the sampled pair $(\mathbf{y}, T)$, one evaluates Eq.~\eqref{eq: bridge_integral} with any desired numerical integration method for the $n$-ball and performs the time integration in Eq.~\eqref{eq: exchanged_expectation} with any method of choice. After many of such samples are drawn, the average value of the evaluations of Eq.~\eqref{eq: exchanged_expectation} yields the final estimation of the expectation of the path integral~\eqref{eq: path_integral}. A summary of the general procedure (no assumptions on the process or domain) is provided in Algorithm~\ref{algo: method}.
\begin{algorithm}[H]
\caption{Fast computation of path integrals of killed processes}
\label{algo: method}
\textbf{Input:} domain $\Omega$, process $\mathbf{X}_t$, function $g$, number of samples $N$, numerical integration rule for the $n$-ball, numerical integration rule for time grid. \\
\textbf{Output:} estimate of Eq.~\eqref{eq: path_integral}.
\begin{algorithmic}[1]
\STATE Initialize $\texttt{sum} \leftarrow 0$;
\FOR{$i = 1$ to $N$}
\STATE Sample the exit location and time $(\mathbf{y}, T) \sim -\mathbf{J}(\mathbf{y}, t) \cdot \mathbf{n(y)}$;
\STATE \texttt{sum} $\leftarrow$ \texttt{sum} + evaluation of double integral in Eq.~\eqref{eq: exchanged_expectation} using $(\mathbf{y}, T)$;
\ENDFOR
\STATE Return $\texttt{sum}/N$.
\end{algorithmic}
\end{algorithm}

\section{Numerical results}
\label{sec: numerical_results}
We provide two numerical examples, differing in the choice of function $g(\mathbf x,t)$, for a Wiener process starting at the center of a unit disk. For an efficient numerical evaluation of Eq.~\eqref{eq: exchanged_expectation} with the analytical expression for $f_\text{bridge}^\Omega$ derived in Appendix~\ref{appendix: bridge_derivation}, we apply a Zernike quadrature~\cite{greengard_zernike} for the spatial integration and a Gauss-Legendre quadrature for the time integration.

In both examples, we employ $10$ radial, $20$ angular, and $10$ temporal nodes to compute the path integral expectation from Eq.~\eqref{eq: exchanged_expectation}. For the confined Brownian bridge given by infinite series~\eqref{eq: solution_2D}, we evaluate the first 100 terms, albeit a much smaller number (10 or 30) is enough to achieve a small bias. Since the $200$ spatial nodes are unchanged, the spatial part of each series term is computed only once at each quadrature node and reused for all samples. Meanwhile, the Gauss-Legendre quadrature must be scaled to the sampled exit time so that the time grid is different for each sample.

As a benchmark, we simulate paths with the standard Euler-Maruyama (EM) method with fixed time steps and then evaluate path integrals with the trapezoidal rule since EM yields an equally spaced time grid. We stop each simulation once the next update of the EM method is outside the disk. Both methods were computed in Python 3.12.10 with vectorized NumPy~\cite{numpy_paper} evaluations.

Example I deals with $g(\mathbf x, t) \equiv 1$, for which Eq.~\eqref{eq: exchanged_expectation} reduces to the expected exit time of a Wiener process starting at the center of the unit disk. The exact value for this quantity is 1/2~\cite[Example 7.4.2]{oksendal_SDE_book}, so it provides a ground truth to check the convergence of both methods. In Example II, we choose $g(\mathbf x, t) = (x_1^2 + x_2^2)^2 \exp(t)$ and check the convergence against the benchmark value $0.0957$ to which both methods converge with a sufficiently large number of samples and sufficiently fine EM time steps.

\begin{figure}[h!]
    \centering
    \includegraphics[width=\linewidth]{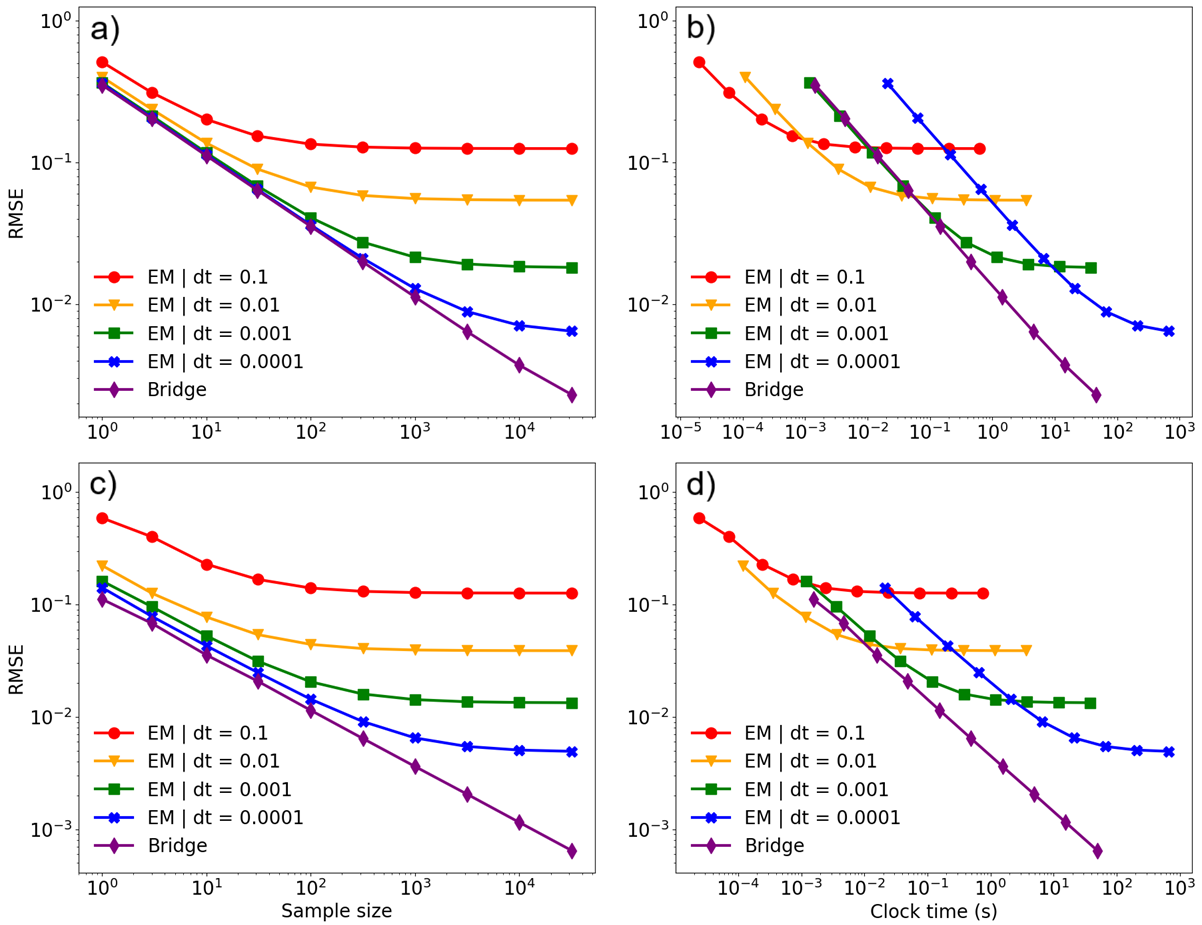}
    \caption{The root mean square error (RMSE) of the stochastic bridge method and the Euler-Maruyama (EM) methods for (top row) Example I: $g(\mathbf x, t) = 1$ and (bottom row) Example II: $g(\mathbf x, t) = (x_1^2 + x_2^2)^2 \exp(t)$. The error is plotted as a function of (left column) sample size and (right column) clock time in seconds. }
    \label{fig: convergence}
\end{figure}

We drew 100,000 samples of each method and measured convergence to the benchmark values using the root mean square error (RMSE). To estimate the RMSE with low variance at different sample sizes, we bootstrapped 100,000 groups for each one of 10 different sample sizes and plotted the convergence in Figure~\ref{fig: convergence} (left column) as RMSE as a function of sample size. For the EM method, we compare four time step sizes $\text dt = 10^{-1}$, $10^{-2}$, $10^{-3}$, $10^{-4}$. For reference, at $\text dt = 10^{-4}$, it takes on average slightly more than 5000 steps to exit the disk.

Figure~\ref{fig: convergence} shows that, in both examples, the standard EM suffers a convergence stall, which is mitigated but not eliminated by decreasing time steps. As explained previously, conventional SDE integration does not correctly model the hitting probabilities and time spent close to the boundaries, leading EM to overestimate the expectation. In contrast, the bias of our bridge method is determined by the number of quadrature points and series terms. They add negligible bias in both examples, as evidenced by the straight error lines.

Adding to its poor performance, EM is significantly slower than our method to achieve the lowest error levels due to the need for fine discretizations (right column of Fig.~\ref{fig: convergence}). Newer methods that aim to correct the bias by modifying paths will likely only increase this high computational cost. Although variance reduction~\cite{sde_general_variance_reduction, sde_romberg_variance_reduction} and adaptive steps~\cite{lambda_adaptive_EM, giles_adaptive_EM} could significantly reduce the cost, the bridge method could also be modified to incorporate variance reduction in the exit sampling and adaptive number of quadrature nodes and series terms. We leave these improvements for future work.

\section{Conclusions}
\label{sec:conclusions}
We have illustrated the promise of the novel method in the simple case of a Wiener process in the $n$-ball, for which an analytical derivation of the confined bridge PDF~\eqref{eq: bridge_pdf} is feasible. Although the need to solve Eqs.~\eqref{eq: forward} and~\eqref{eq: backward} limits the application of our method to more general problems, it can be easily extended with Walk-on-Spheres to tackle a larger class of stochastic processes in arbitrary connected, bounded geometries, as we will show in a later paper. The availability of highly accurate quadratures for the $n$-ball in arbitrary dimensions~\cite{uniqueness_gaussian_quadrature_ball, high_dim_cubatures} guarantees that this improved method retains comparable computational efficiency.

\appendix
\section{Derivation of Brownian bridges confined to $n$-ball}
\label{appendix: bridge_derivation}
For a Wiener process, the generator $\mathcal{A}$ in~\eqref{eq: standard_generator} is self-adjoint, i.e., $\mathcal{A} = \mathcal{A}^*$, so $f^\text{B}$ in~\eqref{eq: backward} is the time-reversed solution of $f^\text{F}$ in~\eqref{eq: forward}, up to the choices of $\mathbf{x}_0$ and $\mathbf{x}_T$. Therefore, it suffices to solve for one of the systems and then make the appropriate adjustments to build the confined bridge PDF in~\eqref{eq: bridge_pdf}. We solve for the Wiener process FKE in $\mathcal{B}_R^n$,
\begin{equation}
\label{eq: brownian_pde}
\begin{cases}
\dfrac{\partial f}{\partial t} = \dfrac{1}{2} \nabla^2 f
& \mathbf{x} \in \mathcal{B}_R^n \\
f(\mathbf{x}, 0) = \delta(\mathbf{x} - \mathbf{x}')
& \mathbf{x} \in \mathcal{B}_R^n \\
f(\mathbf{x}, t) = 0
& \mathbf{x} \in \partial \mathcal{B}_R^n. \\
\end{cases}
\end{equation}
Problem~\eqref{eq: brownian_pde} is amenable to the well-known separation of variables method, so we omit details to present only eigenvalues, eigenfunctions, series coefficients, and the final solution. For all cases, the time eigenfunction is $\exp(-\lambda t/2)$ for some eigenvalue $\lambda$.

Let $f_n(\mathbf{x}, t; \mathbf{x}', R)$ be the solution of system~\eqref{eq: brownian_pde} so that, following Eq.~\eqref{eq: bridge_pdf}, the bridge PDF of the Wiener process confined to the origin-centered $n$-ball of radius $R$ is
\begin{equation}
\label{eq: bridge_solution}
f_{n\text{-ball}}(\mathbf{x}, t; \mathbf{x}_0, \mathbf{x}_T, T, R) 
= \frac{f_n(\mathbf{x}, t; \mathbf{x}_0, R) \,
f_n(\mathbf{x}, T - t; \mathbf{x}_T, R)}
{f_n(\mathbf{x}_T, T; \mathbf{x}_0, R)},
\end{equation}
in which the self-adjointness of the infinitesimal generator is used to write the BKE as a time-reversed FKE~\eqref{eq: brownian_pde}.

Since we are interested in the killed process limit case, i.e., $\mathbf{x}_T \to \mathbf{y} \in \partial \Omega$, let $\mathbf{y} = ||\mathbf{y}||_2 \, \hat{\mathbf{y}}$ such that $\mathbf{x}_T = \lim_{r \to R} r \hat{\mathbf{y}}$. The bridge PDF~\eqref{eq: bridge_pdf} becomes
\begin{equation}
\label{eq: limit_pdf}
\lim_{r \to R} f^{\Omega}_{\text{bridge}}(\mathbf{x}, t; \mathbf{x}_0, \mathbf{x}_T, T) 
= f^\text{F}(\mathbf{x}, t; \mathbf{x}_0, 0) 
\lim_{r \to R} \frac{f^\text{B}(\mathbf{x}, t - T; r \hat{\mathbf{y}}, T)}
{f^\text{F}(r \hat{\mathbf{y}}, T; \mathbf{x}_0, 0)}.
\end{equation}
Applied to problem~\eqref{eq: brownian_pde} of the Brownian bridge confined to $\mathcal{B}_R^n$ ($f^{\Omega}_{\text{bridge}} = f_{n\text{-ball}}$), we found that, for $n = 1, 2, 3$, the limit is obtained from l'H\^{o}pital's rule,
\begin{equation}
\label{eq: limit_brownian_pdf}
\lim_{r \to R} f_{n\text{-ball}}(\mathbf{x}, t; \mathbf{x}_0, \mathbf{x}_T, T, R)
= f_n(\mathbf{x}, t; \mathbf{x}_0, R)
\frac{\dfrac{\partial f_n}{\partial r}(\mathbf{x}, T - t; r \hat{\mathbf{y}}, R)}
{\dfrac{\partial f_n}{\partial r} (r \hat{\mathbf{y}}, T; \mathbf{x}_0, R)},
\end{equation}
in which the radial derivatives are computed from the expressions for $f_n$ that follow.

\subsection{Line segment, $n = 1$}

Solving~\eqref{eq: brownian_pde} in $(0, 2R)$ and then translating to $\mathcal{B}_R^1$ yields eigenvalues $\lambda_k = (k \pi)^2 / (2R)^2$, associated with spatial eigenfunctions $\sin[\sqrt{\lambda_k} (x + R)]$.  The series solution coefficients as a function of the initial condition are
\begin{equation}
\label{eq: coeff_1D}
b_k(x') = \frac{1}{R} \sin [ \sqrt{\lambda_k} (x' + R) ].
\end{equation}
The full solution of Eq.~\eqref{eq: brownian_pde} becomes
\begin{equation}
\label{eq: solution_1D}
f_1(x, t; x', R)
= \sum_{k=1}^\infty b_k(x') \sin[\sqrt{\lambda_k} (x + R)] \, \text{e}^{-\lambda_k t/2}.
\end{equation}

\subsection{Disk, $n = 2$}

We use polar coordinates $(r, \theta)$. Let $J_i$ denote the $i$th Bessel function of the first kind and $z_k^i$ its $k$th zero. The eigenvalues are $\lambda_k^i = (z_k^i / R )^2$, associated with radial eigenfunctions $J_i (r \sqrt{\lambda_k^i})$ and angular eigenfunctions $\sin(n \theta)$ and $\cos(n \theta)$. Setting $\mathbf{x}' = (r', \theta')$, the solution to~\eqref{eq: brownian_pde} is
\begin{subequations}\label{eq: solution_2D}
\begin{equation}
\begin{split}
f_2(\mathbf{x}, t; \mathbf{x}', R) &= \sum_{k=1}^\infty c_k^0(r') J_0 ( r \sqrt{\lambda^0_k} ) \text{e}^{-\lambda^0_k t/2} \\
&\quad + \sum_{n=1}^\infty \sum_{k=1}^\infty J_n ( r \sqrt{\lambda^n_k} ) \left[ c_k^n(r', \theta') \cos(n \theta) + d_k^n(r', \theta') \sin(n \theta) \right] \text{e}^{-\lambda^n_k t/2},
\end{split}
\end{equation}
with the coefficients
\begin{equation}
\label{eq: c_0_2D}
c_k^0(r') = \frac{J_0 (r' \sqrt{\lambda_k^0} )}
{\pi R^2 [ J_1 ( z_k^0 ) ]^2},
\end{equation}
\begin{equation}
\label{eq: c_n_2D}
c_k^n(r', \theta') = \frac{2 J_n (r' \sqrt{\lambda_k^n} ) \cos(n \theta')}
{\pi R^2 [ J_{n+1} ( z_k^n ) ]^2},
\end{equation}
\begin{equation}
\label{eq: d_n_2D}
d_k^n(r', \theta') = \frac{2 J_n (r' \sqrt{\lambda_k^n} ) \sin(n \theta')}
{\pi R^2 [ J_{n+1}( z_k^n ) ]^2},
\end{equation}
\end{subequations}

\subsection{Ball, $n = 3$}

We use spherical coordinates $(r, \theta, \varphi)$. Let $j_l$ denote the $l$th spherical Bessel function of the first kind and $z_k^l$ its $k$th zero. The eigenvalues are $\lambda_k^l = (z_k^l / R )^2$, associated with radial eigenfunctions $j_l (r \sqrt{\lambda_k^l} )$ and spherical angular eigenfunctions $Y_l^m(\theta, \varphi)$, in which $Y_l^m$ denotes the spherical harmonic of degree $l$ and order $m$. Letting $\mathbf{x}' = (r', \theta', \varphi')$ in~\eqref{eq: brownian_pde}, we obtain the solution
\begin{subequations}\label{eq: solution_3D}
\begin{equation}
f_3(\mathbf{x}, t; \mathbf{x}', R)
= \sum_{l=0}^\infty \sum_{m=-l}^{l} \sum_{k=1}^\infty
A_{l,m,k}(r', \theta', \varphi') \, j_l (r \sqrt{\lambda_k^l}) Y_l^m(\theta, \varphi)
\, \text{e}^{-\lambda^l_k t/2},
\end{equation}
with the series coefficients 
\begin{equation}
\label{eq: coeff_3D}
A_{l,m,k}(r', \theta', \varphi') = \frac{2 j_l (r' \sqrt{\lambda_k^l}) Y_l^m(\theta', \varphi')}
{R^3 [ j_{l+1} (z_k^l) ]^2}.
\end{equation}
\end{subequations}

\section{Exit-time PDFs for the $n$-ball}
\label{appendix: exit_times}
The exit-time distributions are derived from the solutions to the forward Kolmogorov problem~\eqref{eq: forward}, following Eq.~\eqref{eq: exit_time_marginal} in Theorem~\ref{thm: wiener_exit_time_pdf}. We reproduce the results for a Wiener process starting at the center of an $n$-ball of radius $R$ for $n = 1, 2, 3$. The expected exit times are found to be $R^2/n$, in line with the well-known result~\cite{oksendal_SDE_book} from Dynkin’s formula. The densities are also plotted in Figure~\ref{fig: exit_time_pdfs} for the unit ball in these three cases.

\subsection{Line segment, $n = 1$}

The exit-time PDF $f_1^{\text{exit}}$ for the $1$-ball of radius $R$ is
\begin{equation}
f_1^{\text{exit}}(t) = \frac{\pi}{2 R^2} \, \sum_{k=0}^{\infty} (-1)^k \, (2k + 1) \, 
\exp \left[ \frac{- (2k + 1)^2 \, \pi^2}{8 R^2} \, t \right].
\end{equation}

\subsection{Disk, $n = 2$}
The exit-time PDF $f_2^{\text{exit}}$ for the $2$-ball (disk) of radius R is
\begin{equation}
f_2^{\text{exit}}(t) = \frac{1}{R^2} \sum_{k=1}^\infty \frac{z_k^0}{J_1(z_k^0)} \, \exp \left[ \frac{(z_k^0)^2}{2 R^2} \, t \right],
\end{equation}
in which $J_1$ is order $1$ Bessel function of the first kind and $z_k^0$ is the $k$-th zero of order $0$ Bessel function of the first kind.

\subsection{Ball, $n = 3$}

The exit-time PDF $f_3^{\text{exit}}$ for the $3$-ball of radius $R$ is
\begin{equation}
%
%
f_3^{\text{exit}}(t) = \frac{1}{R^2} \, \sum_{k=1}^\infty \, 
\frac{\sin(z_k^0) - z_k^0 \cos(z_k^0)}{z_k^0 \, (j_1(z_k^0))^2} \,
\exp \left[ \frac{(z_k^0)^2}{2 R^2} \, t \right],
\end{equation}
in which $j_1$ is order $1$ spherical Bessel function of the first kind and $z_k^0$ is the $k$th zero of order $0$ spherical Bessel function of the first kind.

\begin{figure}[h!]
    \centering
    \includegraphics[width=\linewidth]{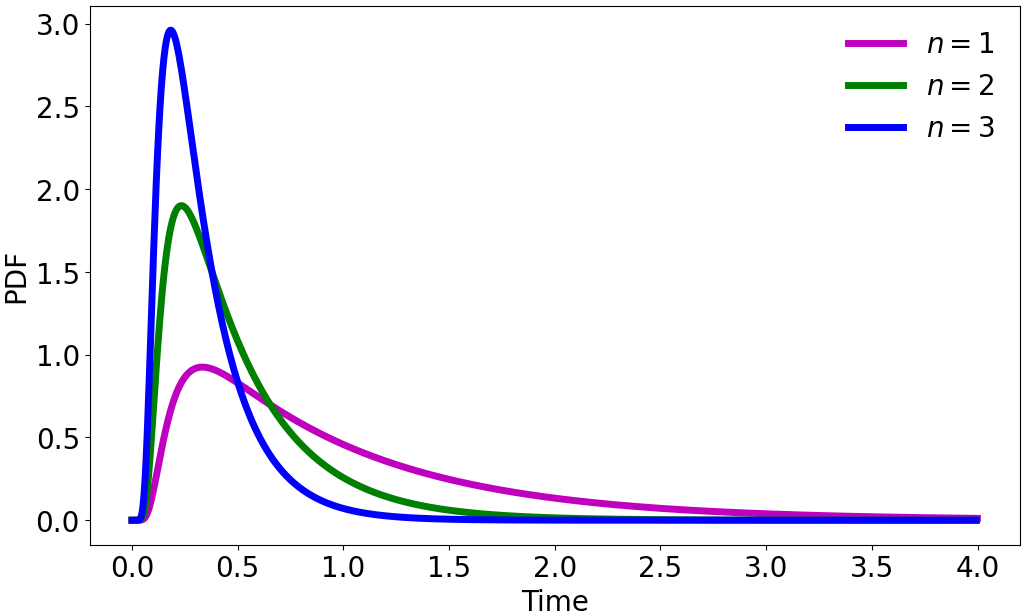}
    \caption{Exit-time PDFs for the unit $n$-ball in dimensions 1, 2, 3.}
    \label{fig: exit_time_pdfs}
\end{figure}


\bibliographystyle{siamplain}
\bibliography{brownian_bridge}
\end{document}